\newcommand{\zz}{\mathbb{Z}}
\newcommand{\cc}{\mathbb{C}}
\newcommand{\qq}{\mathbb{Q}}
\newcommand{\legendre}[2]{\left(\frac{#1}{#2}\right)}
\newcommand{\onetoone}{\hookrightarrow}
\newcommand{\isom}{\cong}
\newcommand{\GL}{\mathrm{GL}}
\newcommand{\Gal}{\mathrm{Gal}}
\newcommand{\N}{\mathbf{N}}
\newcommand{\p}{\mathfrak{p}}
\def\Re{\mathrm{Re}}
\newtheorem{prop}{Proposition}
\newtheorem{thm}{Theorem}
\newtheorem{lma}{Lemma}
\theoremstyle{remark}
\newtheorem*{rmk}{Remark}
\begin{document}
\title{A variant of the Barban-Davenport-Halberstam Theorem\\
}

\author{Ethan Smith}

\address{
Centre de recherches math\'ematiques\\
Universit\'e de Montr\'eal\\
P.O. Box 6128\\
Centre-ville Station\\
Montr\'eal, Qu\'ebec\\
H3C 3J7\\
Canada;
\and
Department of Mathematical Sciences\\
Michigan Technological University\\
1400 Townsend Drive\\
Houghton, Michigan\\
49931-1295\\
USA
}
\email{ethans@mtu.edu}
\urladdr{www.math.mtu.edu/~ethans}

\begin{abstract}
Let $L/K$ be a Galois extension of number fields.  The problem of counting 
the number of prime ideals $\p$ of $K$ with fixed Frobenius class in 
$\Gal(L/K)$ and norm satisfying a congruence condition is considered.
We show that the square of the error term arising from the Chebotar\"ev 
Density Theorem for this problem is small ``on average."  The result may 
be viewed as a variation on the classical Barban-Davenport-Halberstam
Theorem.
\end{abstract}
\keywords{
Barban-Davenport-Halberstam Theorem,
Chebotar\"ev Density Theorem, large sieve}
\subjclass[2000]{11R44, 11N05, 11N36}

\maketitle

\section{Introduction}

The Barban-Davenport-Halberstam Theorem~\cite{Bar:1964,DH:1966,DH:1968}
concerns the average square error for the Prime Number Theorem 
for primes in arithmetic progressions.  In particular, if we let 
\begin{equation*}
\theta(x;q,a):=\sum_{\substack{p\le x\\ p\equiv a\pmod q}}\log p,
\end{equation*}
then the Barban-Davenport-Halberstam Theorem states that,
for every fixed $M>0$,
\begin{equation*}
\sum_{q\le Q}\sum_{\substack{a=1\\ (a,q)=1}}^q\left(\theta(x;q,a)
-\frac{x}{\varphi(q)}\right)^2
\ll xQ\log x,
\end{equation*}
provided that $x(\log x)^{-M}\le Q\le x$.
See~\cite[p.~169]{Dav:1980} for a statement of this theorem with 
$\theta(x;q,a)$ replaced by the Chebychev function $\psi(x;q,a)$.

If one adopts the point of view that the Prime Number Theorem for primes in 
arithmetic progressions is a specialization of the Chebotar\"ev Density 
Theorem~\cite[p.~143]{IK:2004} for the cyclotomic extensions $\qq(\zeta_q)/\qq$, 
then, in light of the Kronecker-Weber Theorem~\cite[p.~210]{Lan:1994}, 
the Barban-Davenport-Halberstam Theorem 
may be interpreted as giving a bound for the average square error for the Chebotar\"ev Density 
Theorem taken over all Abelian extensions of $\qq$.
It would be natural then to study averages that include non-Abelian extensions
as well.  In this paper, we consider averages where the Galois group varies 
over groups of the form $G_1\times G_2$, where $G_1$ is a fixed (potentially non-Abelian) 
group and $G_2\subseteq(\zz/q\zz)^*$ with $q$ varying.

This work is carried out for the purpose of providing a tool to study the 
``average Lang-Trotter" problem in number fields.
Much is owed to the related paper of Murty and Murty~\cite{MM:1987}.

\section{Statement of results}

Let $L/K$ be a Galois extension of number fields with group $G$, and 
let $C$ be a fixed conjugacy class in $G$.
For any pair of integers $q$ and $a$, we define 
\begin{equation*}
\theta(x;C,q,a):=
\sum_{\substack{\N\p\le x\\ \legendre{L/K}{\p}=C\\ 
\N\p\equiv a\pmod q}}
\log\N\p,
\end{equation*}
where the sum is taken over all finite primes $\p$ of $K$ 
that do not ramify in $L$, $\N=\N_{K/\qq}$ denotes the norm, and 
$\legendre{L/K}{\p}$ denotes the Frobenius class of $\p$ in $G$.

For each positive integer $q$, 
let $\zeta_q$ denote a primitive $q$-th root of unity. 
If $L\cap K(\zeta_q)=K$, then 
\begin{equation*}
\mathcal G_{q}:=\Gal(L(\zeta_q)/K)\isom\Gal(K(\zeta_q)/K)\times G.
\end{equation*}
See~\cite[p.~267]{Lan:2002}.
As in~\cite{Smi:2009}, there is a natural composition of maps
\begin{equation}\label{image mod q}
\begin{diagram}
\node{\Gal(K(\zeta_q)/K)}\arrow{e,J}
\node{\Gal(\qq(\zeta_q)/\qq)}\arrow{e,t}{\sim}
\node{(\zz/q\zz)^*.}
\end{diagram}
\end{equation}
Letting $G_{K,q}$ denote the image of this map and $\varphi_K(q):=|G_{K,q}|$, the 
Chebotar\"ev Density Theorem gives the asymptotic identity
\begin{equation}\label{Cheb}
\theta(x;C,q,a)\sim\frac{1}{\varphi_K(q)}\frac{|C|}{|G|}x
\end{equation}
as $x\rightarrow\infty$, provided that $a\in G_{K,q}$ and $L\cap K(\zeta_q)=K$. 
We remark that the Frobenius class of $\p$ in $\Gal(K(\zeta_q)/K)$ is 
determined by the residue of $\N\p$ modulo $q$.

In this paper, we establish a variant of the classical 
Barban-Davenport-Halberstam Theorem similar to the variant of the classical 
Bombieri-Vinogradov Theorem proved by Murty and Murty in~\cite{MM:1987}.
That is, we show an upper bound for the average square error 
in the asymptotic~\eqref{Cheb} when averaging over the set of all $q$ such 
that $L\cap K(\zeta_q)=K$ and over all $a\in G_{K,q}$.
More precisely, we prove the following result.

\begin{thm}\label{main thm}
Let $M>0$.
If $x(\log x)^{-M}\le Q\le x$, then
\begin{equation}
\sum_{q\le Q}\strut^\prime\sum_{a\in G_{K,q}}\left(\theta(x;C,q,a)
-\frac{1}{\varphi_K(q)}\frac{|C|}{|G|}x\right)^2
\ll xQ\log x,
\end{equation}
where the prime on the outer summation indicates that 
the sum is to be restricted to those $q\le Q$ satisfying $L\cap K(\zeta_q)=K$.
The constant implied by the symbol $\ll$ depends on $L$ and $M$.
\end{thm}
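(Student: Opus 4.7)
The plan is to mimic the classical proof of the Barban-Davenport-Halberstam theorem, but with Dirichlet characters replaced by the richer character theory of $\mathcal{G}_q\cong G_{K,q}\times G$. Since the irreducible characters of $\mathcal{G}_q$ are precisely the tensor products $\chi\otimes\chi_\rho$, with $\chi$ a character of $G_{K,q}$ and $\chi_\rho$ an irreducible character of $G$, orthogonality used to simultaneously detect the Frobenius class $C$ and the residue $a$ gives
\begin{equation*}
\theta(x;C,q,a)\;=\;\frac{|C|}{|G|\,\varphi_K(q)}\sum_{\chi,\rho}\overline{\chi(a)\,\chi_\rho(C)}\,\psi(x,\chi,\rho),
\end{equation*}
where $\psi(x,\chi,\rho):=\sum_{\N\p\le x}\log\N\p\cdot\chi(\N\p)\,\chi_\rho(\sigma_\p)$. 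The trivial pair $(\chi,\rho)=(1,1)$ reproduces the Chebotar\"ev main term up to the Prime Ideal Theorem error $O(x\exp(-c\sqrt{\log x}))$, which is comfortably absorbed into what follows after squaring and summing.

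Squaring the difference between $\theta(x;C,q,a)$ and its main term, then summing over $a\in G_{K,q}$ and invoking Parseval in the variable $\chi$, I obtain
\begin{equation*}
\sum_{a\in G_{K,q}}\!\Big(\theta(x;C,q,a)-\tfrac{|C|x}{|G|\varphi_K(q)}\Big)^{\!2}\;\ll\;\frac{1}{\varphi_K(q)}\sum_{\chi\bmod q}\strut^\prime\sum_\rho|\chi_\rho(C)|^2\,|\psi(x,\chi,\rho)|^2,
\end{equation*}
where the primed sum excludes the pair $(\chi,\rho)=(1,1)$ and I have used Cauchy-Schwarz on the $\rho$-sum. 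Since $G$ is fixed, it is enough to prove, for each irreducible character $\chi_\rho$ of $G$, that
\begin{equation*}
\sum_{q\le Q}\strut^\prime\frac{1}{\varphi_K(q)}\sum_{\chi\bmod q}|\psi(x,\chi,\rho)|^2\;\ll\;xQ\log x.
\end{equation*}

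For $\rho=1$ this is precisely the Barban-Davenport-Halberstam inequality for prime ideals of $K$ distributed among the residues in $G_{K,q}$, i.e.\ the direct number-field analogue of the classical theorem; it is established by the Gallagher-style combination of the large-sieve inequality for Hecke characters of $K$ with the Siegel-Walfisz theorem for $K$. For a non-trivial $\rho$, the plan is to invoke Brauer induction to write $\chi_\rho=\sum_i n_i\,\mathrm{Ind}_{H_i}^G\psi_i$ with each $\psi_i$ a one-dimensional character of a subgroup $H_i$; this expresses $\psi(x,\chi,\rho)$ as a finite linear combination of twisted prime-ideal sums over the intermediate fields $L^{H_i}$, each of which becomes a Hecke-character sum over that intermediate field and is again amenable to the number-field BDH bound. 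The diagonal case $\chi=1$, $\rho\ne1$ is handled separately, using that the fixed Artin $L$-function $L(s,\chi_\rho)$ is holomorphic and non-vanishing at $s=1$ to obtain $\psi(x,1,\rho)\ll x\exp(-c\sqrt{\log x})$ by a standard contour argument.

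The main obstacle I foresee is pinning down the single $\log x$ factor (as opposed to a higher power of $\log x$) in the bound. This is sensitive to potential exceptional real zeros in the family of twisted Artin $L$-functions $L(s,\chi\otimes\chi_\rho)$; I expect to invoke Siegel's theorem in its usual ineffective form to dispose of them, which is precisely why the implied constant is allowed to depend on both $L$ and $M$. A secondary technical point is that the Artin conductors grow under Brauer induction, but since $L$ is fixed the resulting factors are absolute constants and can be swallowed into the implied constant as well.
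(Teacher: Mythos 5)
Your high-level architecture is the right one and matches the paper's in spirit: orthogonality to convert the residue condition into a character sum, a dyadic split of the moduli, the large sieve for large $q$, Siegel--Walfisz estimates (with Siegel's ineffective input) for $q$ up to a power of $\log x$, and induction to abelian subgroups to get at the Artin $L$-functions. The genuine divergence is that you detach the Galois-side characters $\chi_\rho$ \emph{before} squaring, pay a Cauchy--Schwarz in the $\rho$-sum, and then attempt to prove a per-$\rho$ mean-value bound. The paper avoids this: it keeps the class function $\delta_C$ whole and applies Parseval only over $\widehat{G_{K,q}}$, which yields $\sum_a |\cdots|^2 = \varphi(q)^{-1}\sum_{\chi\bmod q}|E(x;\delta_C\otimes\chi)|^2$. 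The payoff is that in the large-$q$ range the large sieve applies \emph{directly} to $\theta(x;\delta_C\otimes\chi)=\sum_{n\le x}\chi(n)a_n$ with $a_n=\sum_{\N\p=n,\,\sigma_\p\in C}\log\N\p$ and $\sum_n|a_n|^2\ll n_K\, x\log x$. No Brauer induction, no Hecke characters, no intermediate fields are needed there; the Galois condition just sits inert inside the coefficients $a_n$. The only place the paper needs abelian induction is the Siegel--Walfisz range, and even there it uses the one-subgroup Deuring--MacCluer trick (a positive scalar multiple of a single $\mathrm{Ind}_H^G\delta_{C_H}$), which is cleaner than Brauer's signed integer combination because it keeps everything nonnegative and controls the conductors via a single auxiliary field $E=L^H$.

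Two further points worth flagging. First, your appeal to ``the number-field BDH bound'' over $K$ (and over the intermediate fields $L^{H_i}$) for the per-$\rho$ inequality is not an off-the-shelf result: the moduli are restricted to those $q$ with $L\cap K(\zeta_q)=K$, the inner sum runs over Dirichlet characters of $(\zz/q\zz)^*$ applied to $\N\p$ (not Hecke characters of $K$ in the usual sense), and the $\varphi_K(q)$ vs.\ $\varphi(q)$ bookkeeping requires the observation that characters in the same coset of $G_{K,q}^\perp$ give the same $\psi(x,\chi,\rho)$. You would end up reproving essentially what the paper proves. Second, you pass silently over the reduction to primitive characters (Gallagher's trick of replacing $\chi$ by its primitive inducing character and resumming over multiples of the conductor), which is needed to extract the $q/\varphi(q)$ weight demanded by the large sieve and to get $\log(2Q/q)$ rather than $\log Q$ from $\sum_{k\le Q/q}\varphi(kq)^{-1}$. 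None of these are fatal, but each is a place where your sketch would need real work that the paper's formulation sidesteps.
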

\begin{rmk}
By the triangle inequality, Theorem~\ref{main thm} holds in the case that $C$ is a union of 
conjugacy classes in $G=\Gal(L/K)$ as well.
\end{rmk}
\begin{rmk}
The main result of~\cite{MM:1987} is precisely the Bombieri-Vinogradov type
analogue of Theorem~\ref{main thm} in the case $K=\qq$.
M. R. Murty and K. Petersen have recently extended the main result 
of~\cite{MM:1987} to the setting of number fields.  
However, their average on $q$ is restricted to $q$ satisfying 
$L\cap\qq(\zeta_q)=\qq$.
The author is  grateful for receiving a copy of their preprint~\cite{MP-pp}.
\end{rmk}

As mentioned in the introduction,
this work is carried out for the purpose of providing a tool to study the 
average Lang-Trotter problem in number fields.  In particular, it 
useful for extending the works~\cite{DP:2004, FJKP} to the 
``totally non-Abelian" setting.  As another application, we give the following.
\begin{thm}\label{main thm2}
Suppose that $K=\qq$ and that $L/\qq$ is a totally non-Abelian Galois extension
(i.e., $L\cap\qq(\zeta_q)=\qq$ for all $q\ge 1$).
There exist constants $c,c'$ so that, for each fixed $M>0$,
\begin{equation}
\sum_{q\le x}\sum_{\substack{a=1\\ (a,q)=1}}^q\left(\theta(x;C,q,a)
-\frac{|C|}{\varphi(q)|G|}x\right)^2=
\frac{|C|}{|G|}x^2\log x+cx^2+O\left(\frac{x^2}{(\log x)^M}\right);
\end{equation}
and if $1\le Q\le x$,
\begin{equation}
\begin{split}
\sum_{q\le Q}\sum_{\substack{a=1\\ (a,q)=1}}^q\left(\theta(x;C,q,a)
-\frac{|C|}{\varphi(q)|G|}x\right)^2&=\frac{|C|}{|G|}xQ\log x
-\frac{|C|^2}{|G|^2}xQ\log(x/Q)+c'xQ\\
&\quad+O\left(Q^\frac{5}{4}x^\frac{3}{4}\right)
+O\left(\frac{x^2}{(\log x)^{M}}\right).
\end{split}
\end{equation}
\end{thm}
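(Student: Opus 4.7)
The plan is to adapt Hooley's pair-sum refinement of the Barban-Davenport-Halberstam theorem to the Chebotar\"ev setting, using Theorem~\ref{main thm} in tandem with the classical BDH theorem to control the Frobenius fluctuations. Expanding the square, write $S(x,Q) = A_C - 2B_C + D_C$ where $A_C = \sum_{q\le Q}\sum_a \theta(x;C,q,a)^2$, $B_C = \sum_{q\le Q}\frac{|C|x}{|G|\varphi(q)}\sum_a\theta(x;C,q,a)$, and $D_C = \frac{|C|^2x^2}{|G|^2}\sum_{q\le Q}\varphi(q)^{-1}$. Landau's estimate $\sum_{q\le Q}\varphi(q)^{-1} = A_0\log Q + B_0 + O(Q^{-1}\log Q)$ together with the Chebotar\"ev density theorem $\theta_C(x) = |C|x/|G| + O(x/(\log x)^{M+1})$ yields
\[ 2B_C - D_C = \frac{|C|^2 x^2}{|G|^2}\bigl(A_0\log Q + B_0\bigr) + O\!\left(\frac{x^2}{(\log x)^M}\right). \]

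The main work lies in $A_C$. Switching summations,
\[ A_C = \sum_{\substack{p_1, p_2 \le x\\ \mathrm{Frob}_{p_i}=C}}\log p_1\log p_2\cdot N(p_1,p_2;Q),\qquad N(p_1,p_2;Q) := \#\{q\le Q:\, q\mid p_1-p_2,\ (q,p_1p_2)=1\}, \]
and split $A_C = A_C^{\mathrm{d}} + A_C^{\mathrm{o}}$ according to $p_1 = p_2$ or $p_1\neq p_2$. By Chebotar\"ev and partial summation, $A_C^{\mathrm{d}} = \frac{|C|}{|G|}xQ\log x + O(xQ) + O(xQ/(\log x)^M)$. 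For the off-diagonal, write $\mathbbm{1}_{\mathrm{Frob}_p=C} = \frac{|C|}{|G|} + \epsilon_p$ (so $\epsilon_p$ has mean zero on average) to obtain
\[ A_C^{\mathrm{o}} = \frac{|C|^2}{|G|^2}A_{\mathrm{cl}}^{\mathrm{o}} + 2\frac{|C|}{|G|}U + W, \]
where $A_{\mathrm{cl}}$ is the analogous classical pair sum (without Frobenius restriction) and $U, W$ denote the mixed terms linear and quadratic in $\epsilon$ respectively. Hooley's asymptotic $V_{\mathrm{cl}}(x,Q) = xQ\log Q + c_0 xQ + O(Q^{5/4}x^{3/4}) + O(x^2/(\log x)^M)$, combined with the analogous decomposition $V_{\mathrm{cl}} = A_{\mathrm{cl}} - 2B_{\mathrm{cl}} + D_{\mathrm{cl}}$ and $A_{\mathrm{cl}}^{\mathrm{d}} \sim xQ\log x$, shows that the classical contribution $\frac{|C|^2}{|G|^2}A_{\mathrm{cl}}^{\mathrm{o}}$ produces $-\frac{|C|^2}{|G|^2}xQ\log(x/Q) + \frac{|C|^2 c_0}{|G|^2}xQ$ plus an $x^2$-term that cancels exactly against $-2B_C + D_C$.

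The main obstacle is to show that $2\frac{|C|}{|G|}U + W$ has the form $c_1 xQ + O(\text{admissible error})$ for some constant $c_1$. A naive Cauchy-Schwarz bound is too weak: writing $U = \sum_{q,a}(\theta(x;q,a)-x/\varphi(q))\Delta(x;C,q,a) + O(\text{small})$ with $\Delta = \theta(x;C,q,a) - \frac{|C|}{|G|}\theta(x;q,a)$, and noting that $\sum_{q,a}\Delta^2 \ll xQ\log x$ by Theorem~\ref{main thm} combined with classical BDH, yields only $|U|, |W| \ll xQ\log x$. The required sharpening is obtained by applying Parseval in the variable $a$, $\sum_a|\Delta|^2 = \varphi(q)^{-1}\sum_\chi|T_\chi|^2$, where for non-principal Dirichlet $\chi$ one has $T_\chi = \frac{|C|}{|G|}\sum_{\psi\ne 1}\overline{\psi(c)}\theta(x,\chi\otimes\psi)$ with $\chi\otimes\psi$ a non-trivial character of $\mathrm{Gal}(L(\zeta_q)/\qq) = G\times(\zz/q\zz)^*$ (the totally non-Abelian hypothesis ensures this). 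Invoking unconditional zero-free regions for the associated Artin $L$-functions, together with a large-sieve inequality to average over $q$ and $\chi$, yields the required asymptotic for $U$ and $W$. Assembling the main terms gives the second asymptotic of the theorem; the first arises by the same analysis at $Q = x$, where the full range of $q$ permits direct evaluation of the pair sum through the divisor function, bypassing the Hooley error $Q^{5/4}x^{3/4}$ (which is trivial at $Q = x$).
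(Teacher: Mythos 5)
The paper gives no detailed proof of Theorem~\ref{main thm2} --- it simply says the result follows from Theorem~\ref{main thm} ``by adapting the techniques of Hooley \cite[pp.~209--212]{Hoo:1975} in the obvious manner,'' so I am comparing your argument against what that adaptation would look like.

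Your skeleton (expand the square into $A_C-2B_C+D_C$, treat $B_C,D_C$ via Landau's asymptotic for $\sum_{q\le Q}\varphi(q)^{-1}$ and the effective Chebotar\"ev theorem, split $A_C$ into diagonal and off-diagonal, and handle the latter by a divisor count) is exactly Hooley's framework, and those pieces are fine. Where you depart is in the treatment of the off-diagonal: you introduce $\epsilon_p = \mathbbm{1}_{\mathrm{Frob}_p=C}-|C|/|G|$, pull out a multiple of the \emph{classical} off-diagonal pair sum $A_{\mathrm{cl}}^{\mathrm{o}}$, and are left with the mixed terms $U$ and $W$. That is a genuinely different route. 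Hooley's method applied directly in this setting works with $P_C(n)=\sum_{p_1-p_2=n,\ \mathrm{Frob}_{p_i}=C}\log p_1\log p_2$ itself: the divisor switch splits $\sum_{q\le Q}\sum_m P_C(qm)$ into a small-modulus piece evaluated by the Siegel--Walfisz--Chebotar\"ev estimates of Section~\ref{sw sect} and a small-cofactor piece evaluated by the effective Chebotar\"ev theorem, with the tail controlled by Theorem~\ref{main thm}. This computes the main term of $A_C^{\mathrm o}$ in one pass and never produces quantities like $U$ and $W$ that need to be estimated separately.

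The gap in your version is precisely the step you flag as ``the main obstacle'': you need $2\tfrac{|C|}{|G|}U+W=c_1xQ+O(Q^{5/4}x^{3/4})+O(x^2(\log x)^{-M})$, but the tools you invoke do not deliver that. Zero-free regions for $L(s,\eta\otimes\chi)$ handle only $q\le(\log x)^{O(1)}$, and the large sieve applied to $T_\chi$ gives $\sum_{q\le Q}\varphi(q)^{-1}\sum_\chi^*|T_\chi|^2\ll xQ\log x$, which is the same $\log x$ off that your Cauchy--Schwarz bound already gave. Nothing in the sketch shows how a factor of $\log x$ is saved, let alone how an \emph{asymptotic} (rather than an upper bound) is extracted. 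Worse, the route through $\sum_a|\Delta(x;C,q,a)|^2$ is essentially circular for $W$: writing $\Delta(x;C,q,a)=\sum_{p\equiv a}\epsilon_p\log p$, one has $\sum_{q\le Q}\sum_a|\Delta|^2 = W^{\mathrm d}+W$ where $W^{\mathrm d}$ is the $p_1=p_2$ diagonal, so estimating $\sum_q\sum_a|\Delta|^2$ is exactly the problem of estimating $W$ (the diagonal $W^{\mathrm d}\asymp \tfrac{|C|}{|G|}(1-\tfrac{|C|}{|G|})xQ\log x$ accounts for the whole $xQ\log x$ in the large-sieve bound). Until you give an argument that actually isolates $c_1xQ$ --- which in Hooley's method comes from the explicit divisor manipulations and Siegel--Walfisz evaluations, not from the large sieve --- the proof is incomplete.
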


\begin{rmk}
See~\cite{Smi:2010} for a version of this result when $K=L$ and $K$ is not 
necessarily equal to $\qq$.
\end{rmk}

We omit the proof of Theorem~\ref{main thm2} since it follows from 
Theorem~\ref{main thm} by adapting 
the techniques of Hooley~\cite[pp. 209-212]{Hoo:1975} in the obvious manner.
It would be natural to try to remove the restrictions on $K$ and $L$ from the 
statement of Theorem~\ref{main thm2}.  However, if $K\ne\qq$, then one seems 
to be left with the task of counting the number of prime ideals of a given norm and 
fixed Frobenius class.
In the case, that $L$ is not linearly disjoint from every 
cyclotomic extension, Theorem~\ref{main thm} requires that we remove 
the ``offending moduli" from the outer sum over $q$.  The result is that 
Hooley's trick (see~\cite[p. 210]{Hoo:1975}) of ``switching divisors" does not seem to work
anymore.

The remainder of the article is outlined as follows.  In Section~\ref{artin l-fcts}, we recall 
some useful facts about characters and $L$-functions.  
Section~\ref{sw sect} is devoted to the 
proof of Siegel-Walfisz type results.
The final section is concerned with the proof of 
Theorem~\ref{main thm}.

\section{Class functions, characters, and Artin $L$-functions}
\label{artin l-fcts}

In this section, we set up some general notation and 
recall some of the necessary background information concerning Galois representations 
and Artin $L$-functions.
For a number field $F$, we will write $d_F$ for the discriminant of $F$.
If $L/F$ is an extension of number fields, we will write 
$n_{L/F}:=[L:F]$ for the degree of the extension.  In the case that the base field is 
$\qq$, we will simply write $n_L:=[L:\qq]$.

Suppose that $L/F$ is a Galois extension of number fields, say with group $G$.  
Consider a group representation $\rho: G\rightarrow\GL_n(\cc)$.  The character 
associated to $\rho$ is the function $\eta=\eta_\rho$ defined by 
$\eta(\sigma):=\mathrm{Tr}(\rho(\sigma))$.  
For each finite prime $\p$ of $F$ unramified in $L$, 
choose a prime $\mathfrak P|\p$ in $L$, and let $\sigma_{\mathfrak P}$ denote the Frobenius
at $\mathfrak P$.  Further, put
\begin{equation}
L_\p(s,\eta):=
\det\left(I-\rho(\sigma_\mathfrak P)\N\p^{-s}\right)^{-1},
\end{equation} 
and note that this definition does not depend on the choice of $\mathfrak{P}|\p$.
For the ramified primes, there is a similar but slightly more complicated definition for 
$L_\p(s,\eta)$, and we refer the reader to~\cite[pp.~8-9]{Mar:1975} or~\cite[p.~27]{MM:1997} for the details.
For $\Re(s)>1$, the Artin $L$-series associated to $\rho$ or $\eta$ is 
defined by the Euler product
\begin{equation}
L(s,\eta):=\prod_\p L_\p(s,\eta),
\end{equation}
where the product is over all the prime ideals $\p$ of $F$ (including the ramified primes).

Associated to this $L$-series is an ideal of  $F$ called the \textit{Artin conductor}.
We denote this ideal by $\mathfrak{f}(\eta, L/F)$.  It is defined 
in terms of higher ramification groups, and we refer the reader to~\cite[pp.~13-14]{Mar:1975}
or~\cite[p.~28]{MM:1997} for more detail.
The \textit{conductor} of the $L$-series is then defined by
\begin{equation*}
A(\eta):=|d_F|^{\eta(1)}\N_{F/\qq}(\mathfrak{f}(\eta,L/F)).
\end{equation*}

A function $\xi: G\rightarrow\cc$ satisfying $\xi(g^{-1}\sigma g)=\xi(\sigma)$ for all $g\in G$ 
is said to be a class function on $G$.
Note that the character of a group representation is a class function.
Given any class function $\xi$ on $G$, we define the weighted prime counting function
\begin{equation*}
\theta(x;L/F,\xi):=\sum_{\N\p\le x}\xi(\sigma_\p)\log\N\p,
\end{equation*}
where the sum is over the prime ideals of $F$ that do not ramify in $L$, and 
$\sigma_\p$ denotes any element of the Frobenius class at $\p$.
We also define the function
\begin{equation*}
\psi(x;L/F,\xi):=
\sum_{\N\p^m\le x}\xi(\sigma_\p^m)\log\N\p,
\end{equation*}
where now we include all powers of prime ideals of $F$ that do not ramify in $L$.
For a prime $\p$ of $F$ ramified in $L$ and $\mathfrak{P}$ a prime of $L$ lying above $\p$, 
let $D_\mathfrak{P}$ and $I_\mathfrak{P}$ denote the decomposition and inertia group at $\mathfrak{P}$ respectively.
We extend the definition of $\xi$ to ramfied $\p$ by
\begin{equation*}
\xi(\sigma_\p^m):=\frac{1}{|I_\mathfrak{P}|}\sum_g\xi(g),
\end{equation*}
where the sum is over all $g\in D_\mathfrak{P}$ such that $g\equiv\sigma_\mathfrak{P}^m\pmod{I_\mathfrak{P}}$.
We now define
\begin{equation*}
\widetilde{\psi}(x;L/F,\xi):=
\sum_{\N\p^m\le x}\xi(\sigma_\p^m)\log\N\p,
\end{equation*}
where the sum is over all powers of prime ideals $\p^m$ of $F$ including the ramified ones.
We will sometimes suppress the field extension in the notation for these functions 
when there is no danger of confusion.

It will sometimes be necessary to switch between $\theta$ and $\widetilde\psi$.
The difference between these functions is
\begin{equation*}
\theta(x;\xi)-
\widetilde{\psi}(x;\xi)
=\sum_{\substack{\N\p^m\le x,\\ m\ge 2}}\xi(\sigma_\p^m)\log\N\p
+\sum_{\substack{\N\p\le x\\ \p\text{ ram.}}}\xi(\sigma_\p)\log\N\p.
\end{equation*}
The first sum on the right is over all prime ideals $\p$ of $F$ and all integers $m\ge 2$, 
and is trivially bounded by $||\xi||n_F\sqrt x\log x$, where 
$||\xi||:=\max_{g\in G}\xi(g)$.
The second sum on the right is over only those prime ideals $\p$ of $F$ which ramify in $L$.
For the second sum, we have the bound
\begin{equation*}
\left|\sum_{\substack{\N\p\le x\\ \p\text{ ram.}}}\xi(\sigma_\p)\log\N\p\right|
\le||\xi||\sum_{\p\mid\mathfrak{d}(L/F)}\log\N\p
\le\frac{2||\xi||}{n_{L/F}}\log|d_{L}|
\end{equation*}
by~\cite[Prop. 5]{Ser:1981}.
Therefore,
\begin{equation}\label{swap theta and psi}
\left|\widetilde{\psi}(x;\xi)-
\theta(x;\xi)\right|
\le ||\xi|| n_F\left(
\sqrt x\log x+\frac{2}{n_L}\log|d_L|
\right).
\end{equation}

Any class function $\xi$ can be written as
\begin{equation*}
\xi=\sum_\eta a_\eta\eta,
\end{equation*}
where $a_\eta\in\cc$ and $\eta$ ranges over the irreducible characters of $G$.  
In particular, if $C$ is a conjugacy class of $G$ and $g_C\in C$, then the characteristic function of 
$C$ may be decomposed as
\begin{equation*}
\delta_C=\frac{|C|}{|G|}\sum_{\eta}\overline\eta(g_C)\eta .
\end{equation*}
In general, we have the identity
\begin{equation*}
\widetilde\psi(x;L/F,\xi)=
-\frac{1}{2\pi i}\sum_\eta a_\eta\int_{(2)}\frac{L'(s,\eta)}{L(s,\eta)}\frac{x^s}{s}ds,
\end{equation*}
where the line of integration is $\mathrm{Re}(s)=2$.

Now, suppose that $H$ is a subgroup of $G=\Gal(L/F)$ and $E=L^H$ is the fixed field of $H$.
If $\xi$ is a class function on $H$, we may define the induced class function 
$\mathrm{Ind}_H^G(\xi)$ as follows.  First extend $\xi$ to all of $G$ by 
$\xi(\sigma)=0$ for all $\sigma\in G\backslash H$; then set
\begin{equation*}
\mathrm{Ind}_H^G(\xi)(\sigma):=\frac{1}{|H|}\sum_{g\in G}\xi(g^{-1}\sigma g).
\end{equation*}
An important feature of Artin $L$-functions is their invariance under 
induction.  That is, if $\eta$ is a character of $H$, then 
\begin{equation*}
L(s, L/E, \eta)=L(s, L/F, \mathrm{Ind}_H^G(\eta)).
\end{equation*}
See~\cite[p.~9]{Mar:1975} for example.
Thus, it follows that if $\xi$ is a class function on $H$, then
\begin{equation*}
\widetilde\psi(x;L/E,\xi)=\widetilde\psi(x;L/F,\mathrm{Ind}_H^G(\xi)).
\end{equation*}

Now suppose that $G=G_1\times G_2$, and let $\rho_1, \rho_2$ define irreducible representations of 
$G_1$ and $G_2$ respectively.  Then the tensor product representation, defined 
by
\begin{equation*}
(\rho_1\otimes\rho_2)(g_1,g_2):=\rho_1(g_1)\otimes\rho_2(g_2),
\end{equation*}
is an irreducible representation of $G$.
Moreover, every irreducible representation of $G$ arises in this way.
If $\eta_1,\eta_2$ are the characters associated to $\rho_1,\rho_2$ respectively, then
the character of the tensor product representation is given by
\begin{equation*}
(\eta_1\otimes\eta_2)(g_1,g_2)=\eta_1(g_1)\eta_2(g_2).
\end{equation*}
See~\cite[p.~26-28]{Ser:1977}.
Now, suppose $L_1/F$ and $L_2/F$ are Galois extensions of number fields with 
$\Gal(L_1/F)=G_1$ and $\Gal(L_2/F)=G_2$.  Suppose further that 
$L_1\cap L_2=F$, and set $L=L_1L_2$.  Then $L/F$ is Galois with group 
$G_1\times G_2$~\cite[p.~267]{Lan:2002}.
Finally, we note that the Artin conductors satisfy the relationship
\begin{equation}\label{artin cond of tensor}
\mathfrak{f}(\eta_1\otimes\eta_2, L/F)
\mid\mathfrak{f}(\eta_1,L_1/F)^{\eta_2(1)}\mathfrak{f}(\eta_2,L_2/F)^{\eta_1(1)}.
\end{equation}
See~\cite[p.~80]{Mar:1975} for example.

\section{Siegel-Walfisz type estimates}\label{sw sect}

In this section we assume that $L/E$ is an Abelian extension of number fields.
It follows that $L(\zeta_q)/E$ is also an Abelian extension.
In fact, $\Gal(L(\zeta_q)/E)\onetoone\Gal(L/E)\times (\zz/q\zz)^*$.
See~\cite[p. 267]{Lan:2002} for example.
By the discussion in the last paragraph 
of Section~\ref{artin l-fcts}, any irreducible character $\eta$ of $\Gal(L(\zeta_q)/E)$
arises as $\eta=\omega\otimes\chi$, where $\omega$ is 
an irreducible character of $\Gal(L/E)$ and $\chi$ is a 
Dirichlet character modulo $q$.
By~\eqref{artin cond of tensor}, we have the following conductor relation
\begin{equation}\label{bound conductors}
\begin{split}
A(\omega\otimes\chi,L(\zeta_q)/E)
&\le |d_E|\N_{E/\qq}(\mathfrak{f}(\omega,L/E))\N_{E/\qq}(\mathfrak f(\chi,E(\zeta_q)/E))\\
&\le A(\omega,L/E)q^{n_E}.
\end{split}
\end{equation}
The first objective of this section is to prove the following proposition, 
which is a generalization of Lemma 6.2 of~\cite[p.~267]{MM:1987}.

\begin{prop}\label{SW gen}
Let $L/E$ be an Abelian extension of number fields, $\omega$ an  irreducible character of 
$\Gal(L/E)$, and let $1<q,N$ be any integers so that
 $A(\omega)q^{n_E}\le(\log x)^N$.
Then there exists a positive constant $c_0(E,N)$ so that,
for any Dirichlet character $\chi$ modulo $q$ such that 
$\omega\otimes\chi$ is not trivial,
\begin{equation*}
\widetilde\psi(x;L(\zeta_q)/E,\omega\otimes\chi)
\ll
(\log A(\omega\otimes\chi))
x\exp\left\{
-c_0(E,N)\sqrt{\log x}
\right\}.
\end{equation*}  
\end{prop}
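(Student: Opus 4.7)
The plan is to compute $\widetilde\psi(x;L(\zeta_q)/E,\omega\otimes\chi)$ via the Mellin--Perron contour integral
\[
-\frac{1}{2\pi i}\int_{(2)}\frac{L'(s,\omega\otimes\chi)}{L(s,\omega\otimes\chi)}\frac{x^s}{s}\,ds
\]
and then to shift the line of integration into a classical zero-free region. Because $L(\zeta_q)/E$ is Abelian, $\omega\otimes\chi$ is a one-dimensional Artin character, and by Artin reciprocity $L(s,\omega\otimes\chi)$ coincides with a Hecke L-function over $E$ whose analytic conductor is at most $A(\omega\otimes\chi)\le A(\omega)q^{n_E}$ by~\eqref{bound conductors}. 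Nontriviality of $\omega\otimes\chi$ ensures this L-function is entire, so no residue from the L-factor is picked up during the contour shift.

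After truncating the integral to $|\Im s|\le T$ with the usual Perron tail bound, I would move the contour to a vertical line $\Re s=1-\delta$ sitting inside the Landau-type zero-free region
\[
\Re s>1-\frac{c_1}{\log\bigl(A(\omega\otimes\chi)(|\Im s|+3)\bigr)}
\]
available for Hecke L-functions over a fixed base field. The residue of $-(L'/L)(s)x^s/s$ at $s=0$ is $O(\log A(\omega\otimes\chi))$ via the standard bound on the logarithmic derivative at the origin (coming from the Hadamard factorization together with the functional equation), while the new vertical segment and the two horizontal connectors contribute $O\bigl((\log A(\omega\otimes\chi))\,x^{1-\delta}\log x\bigr)$. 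Choosing $T$ so that $\log T\asymp\sqrt{\log x}$ balances these pieces and produces the factor $\exp(-c_0\sqrt{\log x})$ in the asserted bound.

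The main obstacle, as in every Siegel--Walfisz-type argument, is a possible real exceptional (Siegel) zero $\beta$ of $L(s,\omega\otimes\chi)$, which contributes a term of size $x^\beta/\beta$. I would absorb this using Siegel's theorem in the Hecke setting: for every $\epsilon>0$ there is an ineffective $c(\epsilon)>0$ with $\beta<1-c(\epsilon)A(\omega\otimes\chi)^{-\epsilon}$. Combined with the hypothesis $A(\omega)q^{n_E}\le(\log x)^N$ (hence $A(\omega\otimes\chi)\le(\log x)^N$), choosing $\epsilon=\epsilon(N)$ appropriately yields $x^\beta\ll x\exp(-c_0(E,N)\sqrt{\log x})$. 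This is the source of the ineffective dependence of $c_0$ on $E$ and $N$; everything else is a routine adaptation of the classical Siegel--Walfisz proof to the Hecke L-function $L(s,\omega\otimes\chi)$.
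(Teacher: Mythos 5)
Your proposal follows essentially the same route as the paper: a truncated Perron integral for $\widetilde\psi$, a Landau-type zero-free region for the one-dimensional $L$-function $L(s,\omega\otimes\chi)$, Siegel's theorem to control a possible exceptional zero, and the standard optimization $\log T\asymp\sqrt{\log x}$ together with $A(\omega\otimes\chi)\le(\log x)^N$ to get the final exponent. One small clarification: if you only shift to $\Re s=1-\delta$ with $\delta>0$ small you do not collect a residue at $s=0$, so the $\log A(\omega\otimes\chi)$ factor actually enters through the zero-density bound $N_\eta(t)\ll\log A(\eta)+n_E\log(|t|+2)$ controlling the integrand on the shifted line (and the near-real zeros), which is exactly how the paper, following the Lagarias--Odlyzko explicit-formula version of the argument, obtains it.
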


The proof of this proposition relies heavily on the following,
the content of which is drawn from Propositions 3.4 and 3.8 
of~\cite[pp. 255, 258]{MM:1987}.
\begin{lma}\label{zero free region}
Let $F/E$ be an Abelian extension of number fields, let 
$\eta$ be a character of $\Gal(F/E)$, and 
let $\mathcal{L}(t):=\frac{1}{2}\log A(\eta)+n_E\log(|t|+2)$.
There is an absolute positive constant $c_1$ such that 
$L(s,\eta)$ has at most one zero $\sigma+it$ in 
the region
\begin{equation*}
1-\frac{c_1}{\mathcal{L}(t)}\le\sigma\le 1.
\end{equation*}
If such a zero exists, then it is real and simple, and $\eta$ must be a 
character of order dividing $2$.
Furthermore, we refer to such a zero as an exceptional zero of 
$L(s,\eta)$ and denote it by $\beta_\eta$.

Let $\epsilon>0$, and let $\tilde E$ denote the normal closure of $E$ over 
$\qq$.  If $\eta$ is a character of $\Gal(F/E)$ for which $\beta_\eta$ 
exists, then there is a positive constant $c_2(\epsilon)$ so that
\begin{equation*}
\beta_\eta\le
\max\left\{
1-\frac{c_2(\epsilon)}{(|d_{\tilde E}|^2A(\eta))^\epsilon},
1-\frac{1}{16n_{\tilde E}\log(|d_{\tilde E}|^2A(\eta))}
\right\}.
\end{equation*}
\end{lma}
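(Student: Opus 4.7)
The lemma comprises a standard zero-free region and a Siegel-type bound on a potential exceptional zero for $L$-functions of an abelian extension of number fields $F/E$, which via class field theory are Hecke $L$-functions; I would prove each part by adapting the classical arguments for Dirichlet $L$-functions. For the zero-free region, the starting point is the non-negative trigonometric identity $3 + 4\cos\theta + \cos 2\theta = 2(1+\cos\theta)^2 \ge 0$, which applied to the logarithms of the Euler products of $\zeta_E(s)$, $L(s,\eta)$, and $L(s,\eta^2)$ yields, for $\sigma > 1$ and any real $t$, the inequality
\[
3\cdot\left(-\frac{\zeta_E'(\sigma)}{\zeta_E(\sigma)}\right) + 4\,\Re\left(-\frac{L'(\sigma + it, \eta)}{L(\sigma + it, \eta)}\right) + \Re\left(-\frac{L'(\sigma + 2it, \eta^2)}{L(\sigma + 2it, \eta^2)}\right) \ge 0.
\]
I would then invoke the Hadamard factorizations of the completed $L$-functions to express each logarithmic derivative as a sum over zeros plus explicit boundary contributions controlled by $\tfrac12\log A(\eta) + n_E\log(|t|+2) = \mathcal L(t)$. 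A hypothetical zero $\rho = \beta + it$ of $L(s,\eta)$ contributes roughly $-1/(\sigma-\beta)$ to the middle term; balancing this against the pole $1/(\sigma-1)$ of $\zeta_E$ and optimizing $\sigma$ near $1$ produces the zero-free region for an absolute $c_1$. The obstruction to pushing the argument all the way to $\Re(s) = 1$ arises precisely when $t = 0$ and $\eta^2$ is trivial, i.e.\ when $\eta$ has order dividing $2$ and $L(s,\eta^2) = \zeta_E(s)$ has its own pole at $s = 1$; this allows at most one real simple zero to escape, namely the exceptional zero $\beta_\eta$.

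For the Siegel-type bound on $\beta_\eta$, I would use the classical Siegel two-character dichotomy. Fix $\epsilon > 0$; either every quadratic character of every abelian extension of $\tilde E$ has its exceptional zero bounded above by $1 - c_2(\epsilon)/(|d_{\tilde E}|^2 A(\eta))^\epsilon$, in which case the first alternative of the lemma holds trivially, or else there is a ``bad'' quadratic character $\eta^\ast$ whose exceptional zero $\beta^\ast$ lies extremely close to $1$. In the latter case, form the auxiliary product
\[
F(s) := \zeta_{\tilde E}(s)\,L(s, \eta)\,L(s, \eta^\ast)\,L(s, \eta \otimes \eta^\ast),
\]
which is the Dedekind zeta function of an abelian extension of $\tilde E$ and hence has non-negative Dirichlet coefficients after subtraction of its simple pole at $s = 1$. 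Comparing $F(\beta^\ast) = 0$ with an effective Taylor estimate of $F$ slightly to the left of $s = 1$, together with standard convexity bounds on $L$-values in terms of the analytic conductor, yields an \emph{ineffective} lower bound on $L(1, \eta)$ and thereby on $1 - \beta_\eta$, giving the first alternative. The second alternative $1 - 1/(16 n_{\tilde E}\log(|d_{\tilde E}|^2 A(\eta)))$ is the effective bound one extracts by applying Jensen's formula (or a Landau-style logarithmic-derivative estimate) to $L(s,\eta)$ directly on a small disk around $s = 1$, with no auxiliary character.

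The principal obstacle is Siegel's ineffective constant: the dichotomy argument just sketched produces $c_2(\epsilon)$ without any mechanism for deciding which alternative is actually in force, and no effective version of this step is known. The zero-free region, by contrast, is essentially bookkeeping once Hadamard factorization and the standard convexity bounds for $\log|L(s,\eta)|$ in terms of $\mathcal L(t)$ are in hand. A secondary technical point is tracking how Artin conductors and field discriminants transform under the tensor-product and base-change operations needed to realize $F(s)$ as a bona fide Dedekind zeta function over $\tilde E$; the factor $|d_{\tilde E}|^2$ and the exponent $n_{\tilde E}$ in the statement reflect exactly this bookkeeping via the conductor-discriminant formula and the relation~\eqref{artin cond of tensor}.
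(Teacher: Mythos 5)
You should note first that the paper does not prove this lemma at all: it is imported verbatim from Propositions 3.4 and 3.8 of Murty--Murty \cite{MM:1987}, so the comparison is with the proofs given there. Your first half (the zero-free region) is essentially the standard argument and matches that source: via class field theory $L(s,\eta)$ is a Hecke $L$-function, and the $3+4\cos\theta+\cos 2\theta\ge 0$ inequality applied to $\zeta_E$, $L(s,\eta)$, $L(s,\eta^2)$, combined with Hadamard factorization and the bound $\ll\mathcal L(t)$ for the relevant sums over zeros, gives the region $1-c_1/\mathcal L(t)$ with the single possible real simple exception when $t$ is small and $\eta^2$ is trivial. That part of your sketch is correct in outline.

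The second half has genuine gaps. Your auxiliary product $F(s)=\zeta_{\tilde E}(s)L(s,\eta)L(s,\eta^\ast)L(s,\eta\otimes\eta^\ast)$ is not well defined as written: $\eta$ is a Hecke character of $E$ while $\eta^\ast$ is taken to be a quadratic character attached to an abelian extension of $\tilde E$, so $\eta\otimes\eta^\ast$ has no meaning until both are moved to a common base field, and the claim that $F(s)$ is then the Dedekind zeta function of an abelian extension of $\tilde E$ (hence has nonnegative coefficients) is unjustified; this positivity is exactly the point where such an argument can collapse. Moreover, the effective alternative $1-\bigl(16\,n_{\tilde E}\log(|d_{\tilde E}|^2A(\eta))\bigr)^{-1}$ is not obtained by ``Jensen's formula on a small disk around $s=1$''---a direct Landau/Jensen estimate only yields much weaker information about a real zero near $1$. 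The proof cited by the paper proceeds differently: since $\eta$ is quadratic when $\beta_\eta$ exists, one writes $\zeta_E(s)L(s,\eta)=\zeta_M(s)$ for the quadratic extension $M/E$ cut out by $\eta$, passes to a normal closure (this is where $\tilde E$, the conductor--discriminant bookkeeping $|d_M|\le|d_{\tilde E}|^2A(\eta)$-type estimates, and the factor $n_{\tilde E}$ enter), and invokes Stark's theorem on exceptional zeros of Dedekind zeta functions: either $\beta_\eta$ satisfies the effective $1-1/(16\,n\log|d|)$-type bound, or it is inherited from $\zeta_k$ for $k=\qq$ or a quadratic field $k$, in which case classical Siegel for the associated real Dirichlet character supplies the ineffective $c_2(\epsilon)$ alternative. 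So the dichotomy in the statement reflects Stark plus Siegel over $\qq$, not a two-character Goldfeld-style product over $\tilde E$; to salvage your route you would need to fix a common base field, verify the positivity of the coefficients of your product, and then still carry out the conductor/discriminant bookkeeping that currently only appears as an unproven remark.
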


\begin{rmk}
We observe that if $\eta$ is a character for which $\beta_\eta$ exists and 
$\beta_\eta<1/2$, then the functional equation for $L(s,\eta)$ implies 
that $1-\beta_\eta$ is also a zero since $\eta=\overline\eta$.  
In this case, we would have
\begin{equation*}
1-\frac{c_1}{\mathcal L(0)}\le \beta_\eta<\frac{1}{2}<1-\beta_\eta,
\end{equation*}
which contradicts Lemma~\ref{zero free region}.
Thus, we may assume that if $\beta_\eta$ exists, then
\begin{equation*}
\frac{1}{2}\le 1-\frac{c_1}{\mathcal L(0)}\le \beta_\eta.
\end{equation*}
\end{rmk}

\begin{rmk}
Applying Lemma~\ref{zero free region} to the character $\overline\eta$ and appealing 
to the functional equation for $L(s,\eta)$, we see that $L(s,\eta)$ has at most one 
zero $\sigma+it$ in the region
\begin{equation*}
0\le\sigma\le\frac{c_1}{\mathcal{L}(t)}.
\end{equation*}
Furthermore, if such a zero exists, it must be simple, $\eta=\overline\eta$ must be a 
character of order dividing $2$, and we deduce that this zero equal to $1-\beta_\eta$.
\end{rmk}

\begin{rmk}
The second part of Lemma~\ref{zero free region} depends on 
Siegel's Theorem~\cite[p. 126]{Dav:1980},
and as such is ineffective.  This, of course, means that 
Proposition~\ref{SW gen} is ineffective as well.
\end{rmk}

\begin{proof}[Proof of Proposition~\ref{SW gen}]
The proof follows the traditional method as in~\cite{LO:1977}.
For convenience, we write $\eta=\omega\otimes\chi$.
We begin with integral identity
\begin{equation*}
\widetilde\psi(x;L(\zeta_q)/E,\eta)=
-\frac{1}{2\pi i}\int_{(2)}\frac{L'(s,\eta)}{L(s,\eta)}\frac{x^s}{s}ds.
\end{equation*}
With $\sigma_0=1+(\log x)^{-1}$ and $T$ a real parameter, we 
estimate the right-hand side by the truncated integral
\begin{equation*}
I_\eta(x,T):=
-\frac{1}{2\pi i}\int_{\sigma_0-iT}^{\sigma_0+iT}\frac{L'(s,\eta)}{L(s,\eta)}\frac{x^s}{s}ds.
\end{equation*}
We note that our definition of $I_\eta(x,T)$ differs from that 
in~\cite[p. 440]{LO:1977} by a negative sign.

Arguing as in~\cite[pp. 424-428]{LO:1977}, for $x\ge 2$ and $T>0$, we have
\begin{equation}\label{approx by truncated integral}
\widetilde\psi(x;L(\zeta_q)/E,\eta)
-I_\eta(x,T)\ll n_E\log x+\frac{n_Ex(\log x)^2}{T},
\end{equation}
where the implied constant is absolute.
Since $\chi$ is assumed to be nontrivial, it follows that 
$\eta=\omega\otimes\chi$ is as well.  
Now, provided that $x,T\ge 2$ and that $T$ does not coincide with the imaginary 
part of any zero of $L(s;L(\zeta_q)/E,\eta)$,
in~\cite[p.450]{LO:1977}, we find that
\begin{equation}\label{approx by sums over zeros}
I_\eta(x,T)
+\sum_{\substack{\rho=\beta+i\gamma\\ |\gamma|<T}}\frac{x^\rho}{\rho}
-\sum_{\substack{\rho=\beta+i\gamma\\ |\rho|<\frac{1}{2}}}\frac{1}{\rho}
\ll
\log A(\eta)+n_E\log x+\frac{x\log x}{T}\left(
\log A(\eta)+n_E\log T\right),
\end{equation}
where both sums are taken over the nontrivial zeros $\rho$ of
$L(s;L(\zeta_q)/E,\eta)$ and the implied constant is absolute.
If, however, $T$ does coincide with the imaginary part of some zero $\rho$, 
we may argue as in~\cite[p.451]{LO:1977} and correct the problem 
(by simply increasing the constant implied by the $\ll$ notation).

By Lemma~\ref{zero free region}, if $\rho=\beta+i\gamma\ne\beta_\eta$ 
is not an exceptional zero of 
$L(s;L(\zeta_q)/E,\eta)$ and $|\gamma|<T$, then
\begin{equation*}
|x^\rho|\le 
x\exp\left\{-\frac{c_1\log x}{\mathcal L(T)}\right\}.
\end{equation*}
Let $N_\eta(t)$ denote the number of nontrivial zeros $\rho=\beta+i\gamma$ 
of $L(s;L(\zeta_q)/E,\eta)$ satisfying the condition $|\gamma-t|\le 1$.
By Lemma 5.4 of~\cite[p. 436]{LO:1977}, 
$N_\eta(t)\ll\log A(\eta)+n_E\log(|t|+ 2)$.
Thus,
\begin{equation*}
\begin{split}
\sum_{\substack{\rho=\beta+i\gamma\\ \rho\neq\beta_\eta\\ |\rho|\ge 1/2\\ |\gamma|<T}}
\left|\frac{x^\rho}{\rho}\right|
&\ll
x\exp\left\{-\frac{c_1\log x}{\mathcal L(T)}\right\}
\sum_{j<T}\frac{N_\eta(j)}{j}\\
&\ll
x\exp\left\{-\frac{c_1\log x}{\mathcal L(T)}\right\}
\left(\log A(\eta)+n_E\log T\right)
\log T,
\end{split}
\end{equation*}
the implied constants being absolute.
By the second remark following Lemma~\ref{zero free region}, if 
$\rho=\beta+i\gamma\ne 1-\beta_\eta$, then 
$\beta\ge \frac{c_1}{\mathcal L(t)}$ and hence
\begin{equation*}
\frac{1}{|\rho|}\le\frac{\mathcal L(\gamma)}{c_1}
\ll\log A(\eta)+n_E,
\end{equation*}
the implied constant being absolute.
Thus, 
\begin{equation*}
\begin{split}
\sum_{\substack{\rho=\beta+i\gamma\\ \rho\ne 1-\beta_\eta\\ |\rho|<\frac{1}{2}}}
\left|\frac{x^\rho}{\rho}\right|
+\left|\frac{1}{\rho}\right|
&\ll x^{1/2}\sum_{\substack{|\rho|<\frac{1}{2}\\ \rho\ne 1-\beta_\eta}}\frac{1}{|\rho|}
\ll x^{1/2}N_\eta(0)\max_{\substack{|\rho|<\frac{1}{2}\\ \rho\ne 1-\beta_\eta}}
\left\{\frac{1}{|\rho|}\right\}\\
&\ll x^{1/2}\left(\log A(\eta)+n_E\right)^2,
\end{split}
\end{equation*}
where again, all implied constants are absolute.
By the Mean Value Theorem,
\begin{equation*}
\frac{x^{1-\beta_\eta}}{1-\beta_\eta}-\frac{1}{1-\beta_\eta}
=\frac{x^{1-\beta_\eta}-1}{1-\beta_\eta}=x^\sigma\log x
\end{equation*}
for some $0<\sigma<1-\beta_\eta\le\frac{1}{2}$.
By the second part of Lemma~\ref{zero free region}, 
there exists a positive constant $c_3(E,\epsilon)$ depending on 
$E$ and $\epsilon$ so that
\begin{equation*}
\frac{1}{2}\le\beta_\eta\le 1-\frac{c_3(E,\epsilon)}{A(\eta)^\epsilon}.
\end{equation*}
Hence, 
\begin{equation*}
\begin{split}
\left|\frac{x^{\beta_\eta}}{\beta_\eta}\right|
\le 2x^{\beta_\eta}
\le 2x\exp\left\{
-c_3(E,\epsilon)\frac{\log x}{A(\eta)^\epsilon}
\right\}
\ll x\exp\left\{
-c_3(E,\epsilon)(\log x)^{1-N\epsilon}
\right\},
\end{split}
\end{equation*}
where the implied constant is absolute.
Therefore,
\begin{equation}\label{bound sums over zeros}
\begin{split}
\sum_{\substack{\rho=\beta+i\gamma\\ |\gamma|<T}}\frac{x^\rho}{\rho}
-\sum_{\substack{\rho=\beta+i\gamma\\ |\rho|<\frac{1}{2}}}\frac{1}{\rho}
&\ll
x\exp\left\{
-c_1\frac{\log x}{\mathcal L(T)}
\right\}
\left(\log A(\eta)+\log T\right)
\log T
+\sqrt x(\log A(\eta)+n_E)^2\\
&\quad+x\exp\left\{
-c_3(E,\epsilon)(\log x)^{1-N\epsilon}
\right\}
+\sqrt x(\log x),
\end{split}
\end{equation}
where the implied constant is absolute.
Choosing $\epsilon=1/2N$,
\begin{equation*}
T=\exp\left\{
\frac{1}{n_E}\left(\sqrt{\log x}-\frac{1}{2}\log A(\eta)\right)
\right\},
\end{equation*}
and combining
~\eqref{approx by truncated integral},~\eqref{approx by sums over zeros},
and~\eqref{bound sums over zeros}, the proposition follows.
\end{proof}


We now return to the situation described in the introduction.  That is, we let 
$L/K$ be a Galois (not necessarily Abelian) extension of number fields with group $G$.  
Further, we let $C$ be a fixed conjugacy class in $G$, 
and we let $\delta_C$ denote the characteristic function of $C$.
Given a Dirichlet character $\chi$ modulo $q$, we now wish to approximate 
\begin{equation*}
\theta(x;\delta_C\otimes\chi)=\sum_{\substack{\N\p\le x\\ \legendre{L/K}{\p}=C}}\chi(\N\p)\log\N\p.
\end{equation*}




In the proofs, it will be more convenient to work with $\widetilde\psi$ instead of $\theta$.
By~\eqref{swap theta and psi}, we have
\begin{equation*}
\left|
\widetilde\psi(x;L(\zeta_q)/K,\delta_C\otimes\chi)
-\theta(x;\delta_C\otimes\chi)
\right|
\le
n_K\left(
\sqrt x\log x+\frac{2}{\varphi_K(q)n_L}\log|d_{L(\zeta_q)}|
\right)
\end{equation*}
as $n_{L(\zeta_q)}=\varphi_K(q)n_L$.
Since $[L(\zeta_q):\qq(\zeta_q)]=\varphi_K(q)n_L/\varphi(q)$, 
by Lemma 7 of~\cite[p. 143]{Sta:1974}, we have
$|d_{L(\zeta_q)}|\le |d_L|^{\varphi_K(q)}\cdot|d_{\qq(\zeta_q)}|^{n_L\varphi_K(q)/\varphi(q)}$.
In~\cite[p. 12]{Was:1997}, we find the identity
\begin{equation*}
d_{\qq(\zeta_q)}=(-1)^{\varphi(q)/2}\frac{q^{\varphi(q)}}{\prod_{\ell |q}\ell^{\varphi(q)/(\ell-1)}},
\end{equation*} 
where the product is over the distinct primes $\ell$ dividing $q$.
Thus, we have
\begin{equation*}
\frac{\log|d_{L(\zeta_q)}|}{\varphi_K(q)n_{L}}
\le \frac{\log|d_L|}{n_{L}}+\log q,
\end{equation*}
and therefore,
\begin{equation}\label{replace theta error}
\begin{split}
\left|\theta(x;\delta_C\otimes\chi)-
\widetilde{\psi}(x;L(\zeta_q)/K,\delta_C\otimes\chi)\right|
&\le n_K\left(\sqrt x\log x+\frac{2}{n_{L}}\log|d_L|+2\log q\right).
\end{split}
\end{equation}

We recall that the Galois group $\Gal(K(\zeta_q)/K)$ is isomorphic to some subgroup 
of $(\zz/q\zz)^*$, which we denote by $G_{K,q}$.
Now, let $G_{K,q}^{\perp}$ denote the subgroup of Dirichlet characters modulo $q$ 
that are trivial on $G_{K,q}$.  Then two Dirichlet characters $\chi_1,\chi_2$ 
define the same map on $G_{K,q}$ 
whenever $\chi_1\equiv\chi_2\pmod{G_{K,q}^\perp}$.
Throughout, we will reserve the notation $\chi_0$ for the trivial 
Dirichlet character modulo $q$.  That is, $\chi_0(a)=1$ if $(a,q)=1$ and 
$\chi_0(a)=0$ otherwise.  
\begin{lma}\label{trivially twisted theta}
There exists a positive constant $c_4(L)$ 
so that for any Dirichlet character modulo $q\le x$ with 
$\chi\equiv\chi_0\pmod{G_{K,q}^\perp}$, we have
\begin{equation*}
\theta(x;\delta_C\otimes\chi)-\frac{|C|}{|G|}x\ll
x\exp\left\{
-c_4(L)\sqrt{\log x}
\right\},
\end{equation*}
where the implied constant depends only on $L$.
\end{lma}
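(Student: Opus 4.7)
The plan is a two-step reduction. First, I would use the hypothesis on $\chi$ to remove the twist and reduce $\theta(x;\delta_C\otimes\chi)$ to the ordinary Chebotar\"ev count for $L/K$; second, I would apply an effective Chebotar\"ev density theorem for $L/K$ to finish.

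The condition $\chi\equiv\chi_0\pmod{G_{K,q}^\perp}$ is equivalent to saying that $\chi$ is trivial on $G_{K,q}\subseteq(\zz/q\zz)^*$. For any prime $\p$ of $K$ unramified in $L$ with $\p\nmid q$, the Frobenius of $\p$ in $\Gal(K(\zeta_q)/K)\cong G_{K,q}$ is the residue class $\N\p\bmod q$, and in particular lies in $G_{K,q}$; hence $\chi(\N\p)=1$. Consequently,
\begin{equation*}
\theta(x;\delta_C\otimes\chi)=\theta(x;L/K,\delta_C)-\sum_{\substack{\p\mid q,\ \p\text{ unram. in }L\\ \legendre{L/K}{\p}=C,\ \N\p\le x}}\log\N\p,
\end{equation*}
and the correction sum is crudely bounded by $\sum_{\p\mid q}\log\N\p\le n_K\log q\le n_K\log x$, which is negligible compared to the target error.

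It then remains to prove that $\theta(x;L/K,\delta_C)-\tfrac{|C|}{|G|}x\ll x\exp\{-c(L)\sqrt{\log x}\}$, which is the effective Chebotar\"ev density theorem of Lagarias--Odlyzko for $L/K$. In the spirit of the present paper I would derive it from Proposition~\ref{SW gen}: expand $\delta_C=\tfrac{|C|}{|G|}\sum_\eta\overline{\eta(g_C)}\eta$ over the irreducible characters of $G$, apply Brauer induction to write each $\eta$ as a $\zz$-linear combination of characters of the form $\mathrm{Ind}_H^G\psi$ with $H\le G$ Abelian and $\psi$ one-dimensional, then use invariance of Artin $L$-functions under induction to replace each term $\widetilde\psi(x;L/K,\mathrm{Ind}_H^G\psi)$ by $\widetilde\psi(x;L/L^H,\psi)$, and finally invoke Proposition~\ref{SW gen} with $q=1$ to bound the nontrivial contributions. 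The trivial $\psi$ yields the main term $\tfrac{|C|}{|G|}x$ through the prime ideal theorem for $K$, and~\eqref{swap theta and psi} is used to translate between $\widetilde\psi$ and $\theta$.

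The main obstacle is the bookkeeping needed to guarantee that the implied constant depends only on $L$. This is in fact automatic: the Abelian subextensions $L/L^H$ appearing in the Brauer decomposition and their Artin conductors are determined once $L$ is fixed, so no $q$-dependence leaks in through Proposition~\ref{SW gen}, and every discriminant, degree, or conductor in the argument can be bounded in terms of $L$ alone. Combining the two steps and absorbing the $O(\log x)$ correction into the Chebotar\"ev error yields the claimed bound.
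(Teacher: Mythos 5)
Your first step (removing the twist directly: since $\chi$ is trivial on $G_{K,q}$, one has $\chi(\N\p)=1$ whenever $(\N\p,q)=1$, and the primes with $p\mid q$ contribute at most $n_K\log q$) is sound and parallel to the paper's, which instead notes that $\delta_C\otimes\chi$ and $\delta_C\otimes\chi_0$ agree as class functions on $\mathcal G_q$, so $\widetilde\psi(x;L(\zeta_q)/K,\delta_C\otimes\chi)=\widetilde\psi(x;L/K,\delta_C)$, and then converts to $\theta$ via~\eqref{replace theta error}. Where you genuinely diverge is the second step. The paper simply cites the effective Chebotar\"ev density theorem of Lagarias--Odlyzko for $L/K$ and handles the exceptional zero of $\zeta_L$ with Stark's bound; this is short and keeps the lemma effective. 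You propose to re-derive that input from Proposition~\ref{SW gen} via Brauer induction. That route can be made to work, but as written it has two inaccuracies: (i) Brauer's induction theorem produces elementary subgroups, not Abelian ones; since the $\psi$ are one-dimensional they factor through $H^{\mathrm{ab}}$ and $L(s,L/L^H,\psi)$ is a Hecke $L$-function, so the reduction still goes through, but not as stated — and the simpler Deuring--MacCluer device used in the paper's Lemma~\ref{nontrivially twisted theta} (induce $\delta_{C_H}$ from a single Abelian $H=\langle g_C\rangle$) avoids Brauer entirely; (ii) Proposition~\ref{SW gen} is stated for integers $q>1$, so invoking it with $q=1$ is not literally licensed; you would need to observe that its proof (the Lagarias--Odlyzko contour argument) extends to $q=1$ with $\chi$ trivial and $\omega$ nontrivial. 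Finally, your route bounds the exceptional zero through the Siegel-type estimate inside Proposition~\ref{SW gen}, making the resulting Lemma~\ref{trivially twisted theta} ineffective, whereas the paper's use of Stark's bound keeps it effective (this costs nothing in Theorem~\ref{main thm}, which is ineffective anyway because of Lemma~\ref{nontrivially twisted theta}, but it is a real difference in what the two arguments deliver).
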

\begin{proof}
Properties of $L$-functions imply that
\begin{equation*}
\widetilde\psi(x;L(\zeta_q)/K,\delta_C\otimes\chi)
=\widetilde\psi(x;L(\zeta_q)/K,\delta_C\otimes\chi_0)
=\widetilde\psi(x;L/K,\delta_C).
\end{equation*}
By the Effective Chebotar\"ev Density Theorem~\cite[p. 458]{LO:1977},
we obtain 
\begin{equation}\label{effective chebotarev}
\widetilde\psi(x;L/K,\delta_C)=\frac{|C|}{|G|}x -\frac{|C|}{|G|}\frac{x^{\beta_0}}{\beta_0}
+O\left(x\exp\left\{-c_5n_L^{-1/2}\sqrt{\log x}\right\}\right),
\end{equation}
where $c_5$ is an absolute positive constant, 
$\beta_0$ is a potential ``exceptional zero"
of the Dedekind zeta function for $L$, and the
convention is that the term involving $\beta_0$ 
should be removed if $\beta_0$ does not exist.  
Furthermore, the constant implied by the big-$O$ is absolute.
See~\cite[pp.~413-414]{LO:1977} for the precise definition of $\beta_0$.  
Stark's bound gives
\begin{equation*}
\beta_0<\max\{1-1/(4\log|d_L|), 1-c_6/|d_L|^{1/n_L}\}.
\end{equation*}
See~\cite[p.~249]{MM:1987} or~\cite[p.~148]{Sta:1974} for example.
Therefore,
\begin{equation}\label{cheb plus stark}
\widetilde\psi(x;L/K,\delta_C)=\frac{|C|}{|G|}x
+O\left(x\exp\left\{-c_7n_L^{-1/2}\sqrt{\log x}\right\}\right),
\end{equation}
where the implied constant depends on $L$.
The result now follows by~\eqref{replace theta error}.
\end{proof}

We now turn those characters $\chi\not\equiv\chi_0\pmod{G_{K,q}^\perp}$.
For $q$ smaller than a power of $\log x$, we apply 
Proposition~\ref{SW gen} and the technique of Deuring~\cite{Deu:1935} and 
Macluer~\cite{Mac:1968} to obtain good bounds for 
$\theta(x;\delta_C\otimes\chi)$.

\begin{lma}\label{nontrivially twisted theta}
Let $M>0$ and $q\le(\log x)^M$.  There exists a positive constant 
$c_8(L,M)$ so that if $\chi\not\equiv\chi_0\pmod{G_{K,q}^\perp}$, then
\begin{equation*}
\theta(x;\delta_C\otimes\chi)\ll
x\exp\left\{
-c_8(L,M)\sqrt{\log x}
\right\},
\end{equation*}
where the implied constant depends only on $L$.
\end{lma}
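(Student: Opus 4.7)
The plan is to apply the Deuring--MacCluer reduction to pass from the (possibly non-abelian) extension $L(\zeta_q)/K$ to an abelian one, and then to invoke Proposition~\ref{SW gen}. Fix any $g_C\in C$, set $H=\langle g_C\rangle$ and $E=L^H$; because $H$ is cyclic and $L\cap K(\zeta_q)=K$, the extension $L(\zeta_q)/E$ is abelian with Galois group $H\times\Gal(K(\zeta_q)/K)$. Let $\delta_{g_C,H}$ be the characteristic function of $\{g_C\}$, viewed as a class function on $H$. A direct computation shows
$$\mathrm{Ind}_H^G(\delta_{g_C,H})=\frac{|G|}{|C||H|}\delta_C.$$
Because the induction from $H\times\Gal(K(\zeta_q)/K)$ up to $G\times\Gal(K(\zeta_q)/K)$ is trivial in the second coordinate, it commutes with tensoring by $\chi$, so the induction invariance of Artin $L$-functions (recorded in Section~\ref{artin l-fcts}) gives
$$\widetilde\psi(x;L(\zeta_q)/K,\delta_C\otimes\chi)=\frac{|C||H|}{|G|}\widetilde\psi(x;L(\zeta_q)/E,\delta_{g_C,H}\otimes\chi).$$

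I would then decompose $\delta_{g_C,H}=\frac{1}{|H|}\sum_{\omega}\overline\omega(g_C)\omega$ over the one-dimensional characters $\omega$ of $H=\Gal(L/E)$, reducing matters to bounding $\widetilde\psi(x;L(\zeta_q)/E,\omega\otimes\chi)$ for each $\omega$. Two hypotheses of Proposition~\ref{SW gen} must be checked. First, since $E=L^H$ is one of finitely many intermediate fields determined by $L$, the conductor satisfies $A(\omega)=O_L(1)$; combined with $q\le(\log x)^M$ this yields $A(\omega)q^{n_E}\le(\log x)^N$ for some $N=N(L,M)$. Second, $\omega\otimes\chi$ must be nontrivial: it is trivial only when both $\omega$ is trivial and $\chi$ restricts trivially to $\Gal(K(\zeta_q)/K)\cong G_{K,q}$, and the latter is precisely what the hypothesis $\chi\not\equiv\chi_0\pmod{G_{K,q}^\perp}$ forbids. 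Proposition~\ref{SW gen} then produces
$$\widetilde\psi(x;L(\zeta_q)/E,\omega\otimes\chi)\ll(\log A(\omega\otimes\chi))\,x\exp\left\{-c_0(E,N)\sqrt{\log x}\right\},$$
and by~\eqref{bound conductors} the factor $\log A(\omega\otimes\chi)\le\log(A(\omega)q^{n_E})$ is $O_L(\log\log x)$, which is absorbed by the exponential.

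Summing over the $|H|=O_L(1)$ characters $\omega$ yields $\widetilde\psi(x;L(\zeta_q)/K,\delta_C\otimes\chi)\ll x\exp\{-c\sqrt{\log x}\}$ for some $c=c(L,M)>0$, and~\eqref{replace theta error} converts this into the same bound for $\theta(x;\delta_C\otimes\chi)$, since the error term $n_K(\sqrt x\log x+2n_L^{-1}\log|d_L|+2\log q)$ is itself $\ll_L x\exp\{-c'\sqrt{\log x}\}$ for any $c'<c$. The main subtle points in this plan are the verification that induction from $H\times\Gal(K(\zeta_q)/K)$ commutes with the tensor factor $\chi$, and the nontriviality check for $\omega\otimes\chi$ in every term of the decomposition; once these are in place, the lemma follows immediately from Proposition~\ref{SW gen}.
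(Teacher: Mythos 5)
Your proof is correct and follows essentially the same route as the paper's: the Deuring--MacCluer reduction to an Abelian subgroup $H$ meeting $C$, the tensor/induction identity (which the paper attributes to Mackey), the decomposition of the point mass on $H$ into one-dimensional characters, an application of Proposition~\ref{SW gen}, and the conversion via~\eqref{replace theta error}. The only cosmetic difference is that you fix $H=\langle g_C\rangle$ from the start, which the paper mentions as one admissible choice; this makes $C_H=\{g_C\}$ a singleton and slightly streamlines the constants, but the argument is otherwise the same.
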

\begin{proof}
Ideally, one would like to begin by decomposing 
$\delta_C$ as a linear combination of 
irreducible characters $\eta$ of $G$ and then obtain good bounds for
$|\widetilde\psi(x;L(\zeta_q)/K,\eta\otimes\chi)|$.  
However, this is complicated by the fact that $\eta$ and hence 
$\eta\otimes\chi$ may be associated to a representation 
$\rho$ of $\Gal(L(\zeta_q)/K)$ of dimension greater than one.  
In fact, unless the extension 
$L/K$ is Abelian, there will always be at least one such $\eta$.  See~\cite[p. 25]{Ser:1977} for example.

We instead employ the argument of 
Deuring~\cite{Deu:1935} and MacCluer~\cite{Mac:1968} 
to reduce to the case of one dimensional characters.  
See also~\cite[pp.~246, 250]{MM:1987} and~\cite[pp.~429-430]{LO:1977}.
In particular, we choose some Abelian subgroup $H$ of $G$ so that 
$H\cap C\neq \{\}$.  Note that if $g_C\in C$, then $H=\langle g_C\rangle$ is one possible choice.
Now, take $h_C\in H\cap C$, and let $C_H$ denote the conjugacy class of $h_C$ in $H$.
Then
\begin{equation*}
\delta_C=\frac{|C|\cdot |H|}{|G|\cdot |C_H|}
\mathrm{Ind}_{H}^{G}\delta_{C_H}.
\end{equation*}
Put $\mathcal H_q:= H\times G_{K,q}$.
By Mackey's Induction Theorem~\cite[p.~57]{Ser:1977}, we have
\begin{equation*}
\delta_C\otimes\chi=\frac{|C|\cdot |H|}{|G|\cdot |C_H|}
\mathrm{Ind}_{\mathcal{H}_q}^{\mathcal{G}_q}(\delta_{C_H}\otimes\chi),
\end{equation*}
where we are using the same symbol $\chi$ to represent the character 
on $\mathcal G_{q}$ and the associated character on $\mathcal H_q$.
Now, let $E$ be the subfield of $L(\zeta_q)$ fixed by $\mathcal H_q$,
and note that $E$ does not depend on $q$
since it is also the subfield of $L$ fixed by $H$.
By the invariance of $L$-functions under 
induction, we have
\begin{equation*}
\begin{split}
\widetilde\psi(x;L(\zeta_q)/K,\delta_C\otimes\chi)
&=\frac{|C|\cdot |H|}{|G|\cdot |C_H|}
\widetilde\psi(x;L(\zeta_q)/K,\mathrm{Ind}_{\mathcal H_q}^{\mathcal G_{q}}(\delta_{C_H}\otimes\chi))\\
&=\frac{|C|\cdot |H|}{|G|\cdot |C_H|}
\widetilde\psi(x;L(\zeta_q)/E,\delta_{C_H}\otimes\chi).
\end{split}
\end{equation*}
Now we may write
\begin{equation*}
\delta_{C_H}=\frac{|C_H|}{|H|}\sum_{\omega}\overline\omega(h_C)\omega,
\end{equation*}
where $\omega$ ranges over all the irreducible (one-dimensional) characters of $H$.
Thus,
\begin{equation}\label{bound chebychev by one-dim chebychev}
|\widetilde\psi(x;L(\zeta_q)/K,\delta_C\otimes\chi)|
\le
\frac{|C|}{|G|}
\sum_\omega|\widetilde\psi(x;L(\zeta_q)/E,\omega\otimes\chi)|.
\end{equation}
Since $\chi\not\equiv\chi_0\pmod{G_{K,q}^\perp}$, it follows that 
$\omega\otimes\chi$ is not trivial.  
By~\eqref{bound conductors} and the conductor-discriminant formula,
\begin{equation*}
\sum_\omega\log A(\omega\otimes\chi)\le\log\left(|d_L|q^{n_L}\right).
\end{equation*}
The result now follows by applying Proposition~\ref{SW gen} 
to~\eqref{bound chebychev by one-dim chebychev} and 
using~\eqref{replace theta error}.
\end{proof}

%

\section{Proof of Theorem~\ref{main thm}}

Given two Dirichlet characters, say $\chi_1$ and $\chi_2$, it follows that
$\theta(x;\delta_C\otimes\chi_1)=\theta(x;\delta_C\otimes\chi_2)$
if $\chi_1\equiv\chi_2\pmod{G_{K,q}^\perp}$.
Therefore, we set
\begin{equation*}
E(x;\delta_C\otimes\chi):=\begin{cases}
\theta(x;\delta_C\otimes\chi)-\frac{|C|}{|G|}x& 
\text{if }\chi\equiv\chi_0\pmod{G_{K,q}^\perp},\\
\theta(x;\delta_C\otimes\chi)&\text{otherwise}.
\end{cases}
\end{equation*}
For each $q$, 
let $\widehat G_{K,q}$ denote a complete set of coset 
representatives for the quotient group of Dirichlet characters modulo $G_{K,q}^\perp$.
We begin with the decomposition
\begin{equation*}
\theta(x;C,q,a)-\frac{|C|}{\varphi_K(q)|G|}x
=\frac{1}{\varphi_K(q)}
\sum_{\chi\in\widehat{G_{K,q}}}\overline\chi(a)E(x;\delta_C\otimes\chi).
\end{equation*}
Then squaring, summing over $a\in G_{K,q}$, and applying orthogonality relations as 
in~\cite[p. 170]{Dav:1980} or~\cite[p.~2740]{Smi:2009},
we have
\begin{equation*}
\begin{split}
\sum_{a\in G_{K,q}}\left|\theta(x;C,q,a)
  -\frac{|C|}{\varphi_K(q)|G|}x\right|^2
&=\frac{1}{\varphi_K(q)^2}\sum_{a\in G_{K,q}}
\left|
\sum_{\chi\in\widehat{G_{K,q}}}\overline\chi(a)E(x;\delta_C\otimes\chi)
\right|^2\\
&=\frac{1}{\varphi_K(q)}\sum_{\chi\in\widehat{G_{K,q}}}
\left|E(x;\delta_C\otimes\chi)\right|^2\\
&=\frac{1}{\varphi(q)}\sum_{\chi\bmod q}|E(x;\delta_C\otimes\chi)|^2.
\end{split}
\end{equation*}
In the last line above, the sum is taken over all Dirichlet characters 
modulo $q$.  

For each Dirichlet character $\chi$, we let $\chi_*$ denote the primitive 
character that induces $\chi$.
Summing the above over only those $q$ such that $L\cap K(\zeta_q)=K$ 
and replacing each character $\chi$ by the primitive character that 
induces it, we find that
\begin{equation*}
\sum_{q\le Q}\strut^\prime\sum_{a\in G_{K,q}}
\left(\theta(x;C,q,a)-\frac{1}{\varphi_K(q)}\frac{|C|}{|G|}x\right)^2
\ll\sum_{q\le Q}\strut^\prime(\log q)^2+
\sum_{q\le Q}\strut^\prime\frac{1}{\varphi(q)}\sum_{\chi\bmod q}|
E(x;\delta_C\otimes\chi_*)|^2.
\end{equation*}
Since $\sum_{q\le Q}(\log q)^2\le Q(\log Q)^2<xQ\log x$, 
we concentrate on the second sum on the right.
Arguing as in~\cite[p.~170]{Dav:1980}, 
we note that if $\chi$ is primitive modulo $q$, 
then $\chi$ only induces characters to moduli which are multiples of $q$.  
Hence,
\begin{equation*}
\sum_{q\le Q}\strut^\prime\frac{1}{\varphi(q)}\sum_{\chi\in\mathcal{X}(q)}
|E(x;\delta_C\otimes\chi_*)|^2
=\sum_{q\le Q}\strut^\prime\sum_{\chi}\strut^*
|E(x;\delta_C\otimes\chi)|^2\sum_{k\le Q/q}\frac{1}{\varphi(kq)},
\end{equation*}
where the $\strut^*$ on the sum over $\chi$ denotes the fact that the sum is to be 
restricted to those $\chi$ which are primitive modulo $q$.
Since $\sum_{k\le Q/q}\frac{1}{\varphi(kq)}\ll\varphi(q)^{-1}\log (2Q/q)$, 
\begin{equation*}
\sum_{q\le Q}\strut^\prime\frac{1}{\varphi(q)}\sum_{\chi\in\mathcal{X}(q)}
|E(x;\delta_C\otimes\chi_*)|^2
\ll\sum_{q\le Q}\strut^\prime\frac{1}{\varphi(q)}\log\left(2Q/q\right)
\sum_{\chi}\strut^*|E(x;\delta_C\otimes\chi)|^2;
\end{equation*}
and we see that Theorem~\ref{main thm} follows from the following proposition.
\begin{prop}\label{main prop}
Let $M>0$.
If $x(\log x)^{-M}\le Q\le x$, then
\begin{equation*}
\sum_{q\le Q}\strut'\frac{1}{\varphi(q)}\log\left(2Q/q\right)
\sum_{\chi}\strut^*|E(x;\delta_C\otimes\chi)|^2
\ll xQ\log x,
\end{equation*}
where the $'$ on the outer sum indicates that the sum is to be restricted to those $q$ such that 
$L\cap K(\zeta_q)=K$ and the $\strut^*$ on the inner sum indicates that the sum is to be 
restricted to those characters $\chi$ which are primitive modulo $q$.
The implied constant depends on $L$ and $K$.
\end{prop}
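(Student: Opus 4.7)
The plan is to follow the classical proof of the Barban-Davenport-Halberstam theorem (see~\cite[pp.~169--170]{Dav:1980}): use the pointwise Siegel-Walfisz bounds from Section~\ref{sw sect} for small moduli and the large sieve for Dirichlet characters for large moduli. One extra point compared with the classical situation is that the primitive characters trivial on $G_{K,q}$ must be handled separately, because for them $E(x;\delta_C\otimes\chi)=\theta(x;\delta_C\otimes\chi)-\frac{|C|}{|G|}x$ rather than $E=\theta$.

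Concretely, I would decompose the left-hand side of the proposition as $S_1+S_2$, where $S_1$ is the partial sum restricted to primitive $\chi$ satisfying $\chi\equiv\chi_0\pmod{G_{K,q}^\perp}$ and $S_2$ is the remainder. For $S_1$, Lemma~\ref{trivially twisted theta} supplies the uniform pointwise bound $|E(x;\delta_C\otimes\chi)|\ll x\exp\{-c_4(L)\sqrt{\log x}\}$ valid for every $q\le x$, while the number of primitive $\chi$ modulo $q$ trivial on $G_{K,q}$ is at most $|G_{K,q}^\perp|=\varphi(q)/\varphi_K(q)\le n_K$. Together with the standard estimate $\sum_{q\le Q}\log(2Q/q)/\varphi(q)\ll(\log Q)^2$ this yields $S_1\ll n_K\,x^2(\log x)^2\exp\{-c\sqrt{\log x}\}$, which is $\ll xQ$ because $Q\ge x(\log x)^{-M}$.

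For $S_2$ I would split the outer sum at $R:=(\log x)^{M+1}$. In the range $q\le R$, Lemma~\ref{nontrivially twisted theta} (applied with its parameter equal to $M+1$) gives $|E(x;\delta_C\otimes\chi)|=|\theta(x;\delta_C\otimes\chi)|\ll x\exp\{-c_8(L,M+1)\sqrt{\log x}\}$, and the resulting contribution is bounded by $R(\log Q)\cdot x^2\exp\{-c\sqrt{\log x}\}\ll xQ$. In the range $R<q\le Q$, I would write
\[
\theta(x;\delta_C\otimes\chi)=\sum_{n\le x}b_n\chi(n),\qquad b_n:=(\log n)\cdot\#\bigl\{\mathfrak{p}\text{ unram.\ in }L:\N\mathfrak{p}=n,\,\legendre{L/K}{\mathfrak{p}}=C\bigr\},
\]
note that $|b_n|\le n_K\log n$ combined with the Chebotar\"ev bound~\eqref{effective chebotarev} yields $\sum_{n\le x}|b_n|^2\ll x\log x$, and invoke the classical large sieve inequality
\[
\sum_{q\le Y}\frac{q}{\varphi(q)}\sum_{\chi}\strut^*|\theta(x;\delta_C\otimes\chi)|^2\ll (x+Y^2)\,x\log x.
\]
A dyadic decomposition $q\in(Q_j,2Q_j]$ with $Q_j=2^jR$, combined with $1/\varphi(q)\le 2(q/\varphi(q))/Q_j$ and $\log(2Q/q)\le\log(2Q/Q_j)$ on each block, gives
\[
\log(2Q/Q_j)\sum_{Q_j<q\le 2Q_j}\frac{1}{\varphi(q)}\sum_{\chi}\strut^*|\theta|^2\ll\log(2Q/Q_j)\left(\frac{x^2\log x}{Q_j}+Q_j\cdot x\log x\right).
\]
Summing geometrically over $Q_j\in[R,Q]$ produces $\ll\log(2Q/R)\cdot x^2\log x/R+Qx\log x$; both terms are $\ll xQ\log x$, the first precisely because $R=(\log x)^{M+1}$ and $Q\gg x(\log x)^{-M}$, and the second via the convergent series $\sum_k k\cdot 2^{-k}$.

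The main obstacle is calibrating the threshold $R$ so that the two regimes match: $R$ must be large enough for $x^2\log x/R$ to fit inside $xQ\log x$ (which forces $R\gg(\log x)^{M+1}$) yet small enough that Lemma~\ref{nontrivially twisted theta} remains applicable for $q\le R$. A secondary technical subtlety, absent in the classical $K=\qq$ case, is that the main term in $E$ appears exactly when $\chi\equiv\chi_0\pmod{G_{K,q}^\perp}$, and one must use the elementary bound $\varphi_K(q)\ge\varphi(q)/n_K$ (from the injection $(\zz/q\zz)^*/G_{K,q}\hookrightarrow\Gal((K\cap\qq(\zeta_q))/\qq)$) to control the count of these characters within $S_1$.
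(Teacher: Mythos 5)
Your proof is correct and uses the same three key ingredients as the paper: the large sieve inequality for the large moduli, the Siegel--Walfisz-type Lemmas~\ref{trivially twisted theta} and~\ref{nontrivially twisted theta} for the small moduli, and a dyadic decomposition with threshold $(\log x)^{M+1}$. The only genuine difference is organizational. The paper splits by the size of $q$ first, then for large $q$ applies the large sieve to $\theta$ and disposes of the characters $\chi\equiv\chi_0\pmod{G_{K,q}^\perp}$ with the terse observation that $E\ll\theta$ (justified by Lemma~\ref{trivially twisted theta}, since for those characters $E$ is exponentially small while $\theta\gg x$). You instead peel off the $\chi\equiv\chi_0\pmod{G_{K,q}^\perp}$ characters into a separate sum $S_1$ handled uniformly over all $q\le Q$ by Lemma~\ref{trivially twisted theta} together with the count $|G_{K,q}^\perp|=\varphi(q)/\varphi_K(q)\le n_K$, and only then split $S_2$ by size. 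Your organization makes the number-field-specific subtlety explicit and avoids the slightly opaque ``$E\ll\theta$'' step; the paper's is marginally shorter since the $\equiv\chi_0$ characters are absorbed into the large-sieve estimate along with everything else. Both arguments land on the identical calibration $R=Q_1=(\log x)^{M+1}$ forced by matching $x^2(\log x)^2/R$ against $xQ\log x\gg x^2(\log x)^{1-M}$, and your bookkeeping of the dyadic sum (including the convergent $\sum_k(k+1)2^{-k}$) matches the paper's.
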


In order to prove Proposition~\ref{main prop}, 
we divide the outer sum over $q$ into 
two groupings depending on the size of $q$.
The ``large" $q$ are handled via the large sieve while the ``small" $q$ 
are handled via the results in Section~\ref{sw sect}.
For the convenience of the reader, we give a statement of the large sieve inequality as 
found in~\cite[p.~179]{IK:2004}.
\begin{thm}[Large sieve inequality]
For any complex numbers $a_n$ with $N_0<n\le N_0+N$, where $N$ is a positive integer, we have
\begin{equation*}
\sum_{q\le Q}\frac{q}{\varphi(q)}\sum_{\chi}\strut^*
\left|
\sum_{N_0<n\le N_0+N}\chi(n)a_n
\right|^2\le
\left(Q^2+N-1\right)\sum_{N_0<n\le N_0+N}|a_n|^2.
\end{equation*}
\end{thm}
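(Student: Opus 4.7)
The plan is to reduce the character-sum inequality to the additive (exponential-sum) form of the large sieve, then exploit the fact that the Farey fractions of order $Q$ are well spaced.

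The key analytic input is the following bound: if $\alpha_1,\ldots,\alpha_R$ are real numbers that are pairwise $\delta$-separated modulo $1$, then
\begin{equation*}
\sum_{r=1}^{R}\left|\sum_{N_0<n\le N_0+N} a_n e^{2\pi i \alpha_r n}\right|^2
\le (\delta^{-1}+N-1)\sum_{N_0<n\le N_0+N}|a_n|^2.
\end{equation*}
This is the technical heart of the argument and the main obstacle; the standard proof proceeds by Fourier-analytic duality together with an extremal majorant of Beurling--Selberg type to achieve the sharp constant. Writing $S(\alpha):=\sum_n a_n e^{2\pi i \alpha n}$, I would apply this inequality to the Farey fractions of order $Q$: any two distinct fractions $a_1/q_1$ and $a_2/q_2$ with $q_1,q_2\le Q$ satisfy $|a_1/q_1-a_2/q_2|\ge 1/(q_1q_2)\ge 1/Q^2$, so with $\delta=Q^{-2}$ we obtain
\begin{equation*}
\sum_{q\le Q}\sum_{\substack{1\le a\le q\\ (a,q)=1}}|S(a/q)|^2
\le (Q^2+N-1)\sum_n|a_n|^2.
\end{equation*}

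To pass from exponential sums to character sums, I would invoke the Gauss sum identity: for $\chi$ primitive modulo $q$, one has $|\tau(\overline\chi)|^2=q$ and, for every integer $n$,
\begin{equation*}
\chi(n)\tau(\overline\chi)=\sum_{a\bmod q}\overline\chi(a) e^{2\pi i an/q}.
\end{equation*}
Multiplying by $a_n$, summing over $n$, and taking absolute squares yields
\begin{equation*}
\left|\sum_n a_n \chi(n)\right|^2
= \frac{1}{q}\left|\sum_{a\bmod q}\overline\chi(a)S(a/q)\right|^2.
\end{equation*}
Summing over primitive $\chi\bmod q$, enlarging the sum to run over all Dirichlet characters modulo $q$ (permissible since each summand is nonnegative), and applying the orthogonality relations of characters modulo $q$ produces
\begin{equation*}
\frac{q}{\varphi(q)}\sum_{\chi}\strut^*\left|\sum_n a_n\chi(n)\right|^2
\le \sum_{\substack{1\le a\le q\\ (a,q)=1}}|S(a/q)|^2.
\end{equation*}

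Summing this last inequality over $q\le Q$ and combining with the Farey-fraction bound from the first step gives the claimed estimate. Apart from the analytic large sieve, everything reduces to algebraic manipulation with Gauss sums, character orthogonality, and the spacing property of Farey fractions; the main obstacle is thus the extremal/Fourier-analytic construction that yields the sharp factor $\delta^{-1}+N-1$.
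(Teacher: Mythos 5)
This theorem is not proved in the paper at all: it is quoted verbatim, for the reader's convenience, from~\cite[p.~179]{IK:2004}, so there is no internal proof to compare against. Your outline is the standard textbook argument (essentially the one in Iwaniec--Kowalski and Davenport), and the reduction steps you give are correct: for primitive $\chi\bmod q$ the identity $\chi(n)\tau(\overline\chi)=\sum_{a\bmod q}\overline\chi(a)e^{2\pi i an/q}$ holds for \emph{all} $n$ (primitivity is exactly what is needed when $(n,q)>1$), $|\tau(\overline\chi)|^2=q$, enlarging from primitive to all characters is legitimate by positivity, and orthogonality then yields $\frac{q}{\varphi(q)}\sum_{\chi}\strut^*\bigl|\sum_n a_n\chi(n)\bigr|^2\le\sum_{(a,q)=1}\bigl|S(a/q)\bigr|^2$; the Farey fractions of order $Q$ are indeed $Q^{-2}$-spaced modulo $1$, so the additive large sieve with $\delta=Q^{-2}$ gives the stated bound with constant $Q^2+N-1$.

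The one caveat is that your argument is a reduction, not a self-contained proof: the analytic core --- the inequality $\sum_r|S(\alpha_r)|^2\le(\delta^{-1}+N-1)\sum_n|a_n|^2$ for $\delta$-spaced points, with the sharp constant due to Montgomery--Vaughan and Selberg --- is invoked as a black box rather than established. Given that the paper itself simply cites the finished inequality, this is a reasonable level of detail, but if you intend the write-up to stand alone you would need to either prove that spacing inequality (e.g.\ via the duality principle together with the Beurling--Selberg majorant, or via the Montgomery--Vaughan generalized Hilbert inequality) or cite it explicitly as you cite the Gauss-sum facts.
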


\begin{proof}[Proof of Proposition~\ref{main prop}]
Fix $M>0$, and put $Q_1:=(\log x)^{M+1}$.
Applying the large sieve, we obtain
\begin{equation*}
\sum_{q\le Q}\strut^\prime\frac{q}{\varphi(q)}
\sum_{\chi}\strut^*|\theta(x;\delta_C\otimes\chi)|^2
\le n_K(Q^2+x)\sum_{\substack{\N\p\le x\\ \legendre{L/K}{\p}=C}}\left(\log\N\p\right)^2
\ll(Q^2+x)x\log x,
\end{equation*}
where the implied constant depends on $L$.
Thus, we have
\begin{equation*}
\sum_{U<q\le 2U}\strut^\prime\frac{1}{\varphi(q)}
\sum_{\chi}\strut^*|\theta(x;\delta_C\otimes\chi)|^2
\ll x\log x\left(U+xU^{-1}\right)\log\left(2Q/U\right).
\end{equation*}
Summing over $U=Q2^{-k}$, for $Q$ as specified in 
the statement of Proposition~\ref{main prop}, we have
\begin{equation*}
\begin{split}
\sum_{Q_1<q\le Q}\strut^\prime\frac{1}{\varphi(q)}\log\left(2Q/q\right)
\sum_{\chi}\strut^*|\theta(x;\delta_C\otimes\chi)|^2
&\ll x^2Q_1^{-1}(\log x)^2+xQ\log x\\
&\ll xQ\log x,
\end{split}
\end{equation*}
where the implied constant depends on $L$.
We note that 
$E(x;\delta_C\otimes\chi)\ll\theta(x;\delta_C\otimes\chi)$
by Lemma~\ref{trivially twisted theta} and Lemma~\ref{nontrivially twisted theta}.
Hence,
\begin{equation*}
\sum_{Q_1<q\le Q}\strut^\prime\frac{1}{\varphi(q)}\log\left(2Q/q\right)
\sum_{\chi}\strut^*|E(x;\delta_C\otimes\chi)|^2\ll xQ\log x,
\end{equation*}
where the implied constant depends on $L$.
This handles the large values of $q$.

For the contribution arising from the small values of $q$, viz., 
$q\le Q_1= (\log x)^{M+1}$, we apply Lemmas~\ref{trivially twisted theta} 
and~\ref{nontrivially twisted theta} to obtain the bound
\begin{equation*}
\begin{split}
\sum_{q\le Q_1}\strut^\prime\frac{1}{\varphi(q)}\log\left(2Q/q\right)
\sum_{\chi}\strut^*|E(x;\delta_C\otimes\chi)|^2
&\ll Q_1(\log Q)x^2
\exp\left\{
-c(L,M)\sqrt{\log x}
\right\}\\
&\ll x^2(\log x)^{-M}\ll xQ\log x,
\end{split}
\end{equation*}
where the implied constant depends on $L$ and $M$.
\end{proof}

\bibliographystyle{plain}
\bibliography{references}

\end{document}